\documentclass[12pt,a4paper,oneside]{amsart}

\usepackage{amsfonts, amsmath, amssymb, amsthm, hyperref}
\usepackage{anysize}
\usepackage{color}
\usepackage{graphics}

\newtheorem{theorem}{Theorem}[section]
\newtheorem*{theorem*}{Theorem}

\newtheorem{corollary}[theorem]{Corollary}
\newtheorem{problem}[theorem]{Problem}

\theoremstyle{definition}
\newtheorem{definition}[theorem]{Definition}

\theoremstyle{remark}
\newtheorem{remark}[theorem]{Remark}

\numberwithin{equation}{section}

\renewcommand{\epsilon}{\varepsilon}
\renewcommand{\phi}{\varphi}
\renewcommand{\kappa}{\varkappa}

\newcounter{fig}
\newcommand{\f}{\refstepcounter{fig} Fig. \arabic{fig}. }

\begin{document}

\title{Cutting the same fraction of several measures}

\author{Arseniy~Akopyan}

\address{Arseniy Akopyan, Institute for Information Transmission Problems RAS, Bolshoy Karetny per. 19, Moscow, Russia 127994
\newline \indent
B.N.~Delone International Laboratory ``Discrete and Computational Geometry'', Yaroslavl' State University, Sovetskaya st. 14, Yaroslavl', Russia 150000}

\email{akopjan@gmail.com}
\thanks{The research of A.V.~Akopyan is supported by the Dynasty Foundation, the President's of Russian Federation grant MD-352.2012.1, the Russian Foundation for Basic Research grants 10-01-00096 and 11-01-00735, and the Russian government project 11.G34.31.0053.}

\author{Roman~Karasev}

\address{Roman Karasev, Dept. of Mathematics, Moscow Institute of Physics and Technology, Institutskiy per. 9, Dolgoprudny, Russia 141700
\newline \indent
B.N.~Delone International Laboratory ``Discrete and Computational Geometry'', Yaroslavl' State University, Sovetskaya st. 14, Yaroslavl', Russia 150000}

\email{r\_n\_karasev@mail.ru}
\urladdr{http://www.rkarasev.ru/en/}
\thanks{The research of R.N.~Karasev is supported by the Dynasty Foundation, the President's of Russian Federation grant MD-352.2012.1, the Russian Foundation for Basic Research grants 10-01-00096 and 10-01-00139, the Federal Program ``Scientific and scientific-pedagogical staff of innovative Russia'' 2009--2013, and the Russian government project 11.G34.31.0053.}

\subjclass[2010]{52C35, 60D05}
\keywords{Ham sandwich theorem}

\begin{abstract}
We study some measure partition problems: Cut the same positive fraction of $d+1$ measures in $\mathbb R^d$ with a hyperplane or find a convex subset of $\mathbb R^d$ on which $d+1$ given measures have the same prescribed value. For both problems positive answers are given under some additional assumptions.
\end{abstract}

\maketitle

\section{Introduction}

The famous ``ham sandwich'' theorem of Stone, Tukey, and Steinhaus~\cite{stone1942generalized,steinhaus1945division} asserts that every $d$ absolutely continuous probability measures in $\mathbb R^d$ can be simultaneously partitioned into equal parts by a single hyperplane. 

In~\cite{bereg2012balanced} M.~Kano and S.~Bereg raised the following question (in the planar case): If we are given $d+1$ measures in $\mathbb R^d$ and want to cut the same (but unknown) fraction of every measure by a hyperplane then what assumptions on the measures allow us to do so? Certainly, additional assumptions are required because if the measures are concentrated near vertices of a $d$-simplex then such a fraction cut is impossible. A sufficient assumption is described below:

\begin{definition}
Let $\mu_0$, $\mu_1$, \dots, $\mu_d$ be absolutely continuous probability measures on $\mathbb R^d$ and let $\varepsilon\in (0, 1/2)$. Call the set of measures \emph{$\varepsilon$-not-permuted} if for any halfspace $H$ the inequalities $\mu_i(H) < \varepsilon$ for all $i=0, 1, \dots, d$ imply
\[
\mu_i(H)\ge \mu_j(H), \, \text{for some } i<j.
\]
\end{definition}

\begin{remark}
\label{re:not-permutation}
For $\varepsilon>0$ consider all halfspaces $H$ in $\mathbb{R}^d$ such that $\mu_i(H)<\varepsilon$ for all $i$ and the values $\mu_i(H)$ are pairwise distinct. If we arrange the values $\mu_i(H)$ in the ascending order then we get some permutation of $\{0, 1, \dots, d\}$. So the measures $\mu_i$ are $\varepsilon$-not-permuted if and only if in such a way we cannot get all possible permutations of the $d+1$ element set.
\end{remark}

\begin{remark}
A natural example of $\varepsilon$-not-permuted measures appears when the support of one measure lies in the interior of the convex hull of the union of supports of the other $d$ measures. In this case the measures are $\varepsilon$-not-permuted for sufficiently small $\varepsilon$.
\end{remark}

Now we state the main result:

\begin{theorem}
\label{thm:fraction-cut}
Suppose $\mu_0$, $\mu_1$, $\dots$, $\mu_d$ are absolutely continuous probability $\varepsilon$-not-permuted measures in $\mathbb R^d$ for some $\varepsilon\in(0, 1/2)$. Then there exists a halfspace $H$ such that
\[
\mu_0(H) = \mu_1(H) = \dots = \mu_d(H) \in [\varepsilon, 1/2].
\]
\end{theorem}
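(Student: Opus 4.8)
The plan is to encode halfspaces by the sphere $S^d$ and reduce the statement to a Borsuk--Ulam argument, using the $\varepsilon$-not-permuted hypothesis to control \emph{which} equalizing halfspace is produced. I would identify $\mathbb R^d$ with the affine hyperplane $\{x_0=1\}\subset\mathbb R^{d+1}$ and, for $a=(a_0,\dots,a_d)\in S^d$, set $H_a=\{x\in\mathbb R^d: a_0+a_1x_1+\cdots+a_dx_d\ge 0\}$, so that $H_{-a}$ is the complementary halfspace up to a null set. Define $F\colon S^d\to\mathbb R^d$ by $F_i(a)=\mu_i(H_a)-\mu_0(H_a)$ for $i=1,\dots,d$. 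Absolute continuity makes $F$ continuous, and since $\mu_i(H_{-a})=1-\mu_i(H_a)$ the map is odd: $F(-a)=-F(a)$. A zero of $F$ is exactly a halfspace on which all $\mu_i$ agree, and replacing a halfspace by its complement sends the common value $v$ to $1-v$; hence it suffices to produce a zero whose common value lies in $[\varepsilon,1-\varepsilon]$ (then $H_a$ or $H_{-a}$ gives a value in $[\varepsilon,1/2]$). Borsuk--Ulam already yields \emph{some} zero of the odd map $F$, but a priori this could be a trivial one with common value $0$ or $1$; the whole difficulty is to force the value into $[\varepsilon,1-\varepsilon]$.

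Arguing by contradiction, I would assume that no zero of $F$ has common value in $[\varepsilon,1-\varepsilon]$. Put $V^-=\{a:\mu_i(H_a)<\varepsilon\ \text{for all }i\}$ and $V^+=-V^-=\{a:\mu_i(H_a)>1-\varepsilon\ \text{for all }i\}$; these are open, antipodal, and disjoint because $\varepsilon<1/2$. Every zero $a$ has all $\mu_i(H_a)$ equal to a common value outside $[\varepsilon,1-\varepsilon]$, so it lies in $V^-$ (value $<\varepsilon$) or in $V^+$ (value $>1-\varepsilon$); equivalently $F$ does not vanish on the antipodal compact set $K=S^d\setminus(V^-\cup V^+)$. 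I would then translate the hypothesis: the measures being $\varepsilon$-not-permuted means precisely that no halfspace with all $\mu_i(H)<\varepsilon$ satisfies $\mu_0(H)<\mu_1(H)<\cdots<\mu_d(H)$. Writing $y_i=F_i=\mu_i-\mu_0$, this chain of inequalities describes the open, pointed, convex cone $C=\{y\in\mathbb R^d:0<y_1<y_2<\cdots<y_d\}$ with apex at the origin, and the hypothesis says exactly that $F(V^-)\cap C=\emptyset$.

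The key step is to remove all zeros of $F$ by a single odd perturbation, exploiting that the forbidden region is a cone through the origin. Fix any $w\in C$, say $w=(1,2,\dots,d)$, and choose by Urysohn's lemma a continuous $\lambda\colon S^d\to[0,1]$ that equals $1$ on the compact zero set $F^{-1}(0)\cap V^-$ and vanishes outside $V^-$. Define $g\colon S^d\to\mathbb R^d$ by $g(a)=\lambda(a)\,w$ for $a\notin V^+$ and $g(a)=-\lambda(-a)\,w$ for $a\in V^+$; then $g$ is continuous and odd, and so is $F_1=F-g$. I claim $F_1$ never vanishes. On $K$ we have $F_1=F\neq 0$. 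On $V^-$, a zero would give $F(a)=\lambda(a)w$: if $\lambda(a)>0$ this point lies in $C$ (which is closed under positive scaling), contradicting $F(V^-)\cap C=\emptyset$; if $\lambda(a)=0$ then $F(a)=0$, forcing $a\in F^{-1}(0)\cap V^-$, where however $\lambda=1$. The case $V^+$ follows by oddness. Thus $F_1$ is a continuous odd map $S^d\to\mathbb R^d\setminus\{0\}$, which is impossible by the Borsuk--Ulam theorem, and this contradiction proves the theorem. The \textbf{main obstacle} is exactly this pinning down of the common value, and the observation that unlocks it is that the $\varepsilon$-not-permuted condition forbids an entire \emph{cone} with apex $0$, so that subtracting a positive multiple of an interior vector of that cone cancels the confined zeros without ever creating new ones. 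The only technical points left to check are the continuity of $F$ at the poles and the compactness of $F^{-1}(0)\cap V^-$; the latter holds because its closure avoids $\partial V^-$, since a zero on $\partial V^-$ would have common value exactly $\varepsilon$, which is excluded by the standing assumption.
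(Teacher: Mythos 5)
Your proof is correct, and while it rests on the same two pillars as the paper's --- the Borsuk--Ulam theorem applied to the sphere parameterizing halfspaces, and the observation that the $\varepsilon$-not-permuted hypothesis forbids exactly the increasing chain $\mu_0(H)<\mu_1(H)<\cdots<\mu_d(H)$, i.e.\ an open cone in the target --- the mechanism by which you dispose of the trivial zeros is genuinely different. The paper perturbs the \emph{input}: it applies the ham sandwich theorem for charges to $\rho^s_i=(1+s)\mu_i-\mu_{i-1}$ for each $s>0$, shows via the identity $\mu_{i-1}(H)=\mu_i(H)+s(\mu_i(H)-1/2)$ that the resulting halfspace cannot lie in your set $V^-$ (nor, by passing to the complementary halfspace, in $V^+$), and then extracts a limit as $s\to 0$ by compactness. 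You instead perturb the \emph{test map}: assuming all zeros of $F$ are confined to $V^-\cup V^+$, you subtract an odd bump supported there and pointing into the forbidden cone $C$, producing a nonvanishing odd map $S^d\to\mathbb R^d$ and an immediate contradiction. Your route avoids the limiting step entirely and isolates cleanly where the hypothesis enters (it guarantees $F(V^-)\cap C=\emptyset$, so zeros in $V^-$ can be pushed off in the direction $w\in C$ without creating new ones, $C$ being a cone through the origin); the paper's route is more computational, needs no Urysohn function, and packages the topology once and for all into the separately useful ham sandwich theorem for charges. The technical points you flag --- continuity of $F$ at the two poles (where $H_a$ degenerates and $F$ necessarily vanishes, which is precisely why a naive application of Borsuk--Ulam is insufficient; both poles sit safely inside $V^-\cup V^+$) and the compactness of $F^{-1}(0)\cap V^-$ --- are handled correctly; in particular your observation that a zero of $F$ on $\partial V^-$ would have common value exactly $\varepsilon$ is the right way to see that this zero set is closed and hence admits the required cutoff function.
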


Note that in~\cite{barany2008slicing,breuer2010uneven,karasev2009theorems} a similar problem was considered: Cut by a hyperplane a \emph{prescribed} fraction of each of $d$ measures in $\mathbb R^d$. Again, this cannot be done in general and some additional assumptions were needed.

A straightforward consequence of Theorem~\ref{thm:fraction-cut} follows by considering one measure concentrated near a point:

\begin{corollary}
Suppose $\mu_1$, $\dots$, $\mu_d$ are absolutely continuous probability measures in $\mathbb R^d$ and $p$ is a point in the convex hull of their supports. Then there exists a halfspace $H$ such that
\[
\mu_1(H) = \mu_2(H) = \dots = \mu_d(H)
\]
and $p\in \partial H$.
\end{corollary}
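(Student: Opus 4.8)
My plan is to realize the point $p$ as a degenerate extra measure and invoke Theorem~\ref{thm:fraction-cut}. For each $n$ I would take $\mu_0^{(n)}$ to be the normalized Lebesgue measure on the ball $B_n = B(p, 1/n)$, and set $K = \conv\bigl(\bigcup_{i=1}^d \operatorname{supp}\mu_i\bigr)$. Assume first that $p \in \inte K$. Then $B_n \subset \inte K$ for all large $n$, so by the remark on natural examples of $\varepsilon$-not-permuted measures the tuple $\mu_0^{(n)}, \mu_1, \dots, \mu_d$ is $\varepsilon_n$-not-permuted for some $\varepsilon_n \in (0, 1/2)$. Applying Theorem~\ref{thm:fraction-cut} yields a halfspace $H_n$ with
\[
\mu_0^{(n)}(H_n) = \mu_1(H_n) = \dots = \mu_d(H_n) \in [\varepsilon_n, 1/2].
\]

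The point I would emphasize is that the two-sided bound $[\varepsilon_n, 1/2]$ pins the bounding hyperplane $\partial H_n$ near $p$. Since $\mu_0^{(n)}(H_n) > 0$, the ball $B_n$ cannot lie entirely outside $H_n$; since $\mu_0^{(n)}(H_n) \le 1/2 < 1$, it cannot lie entirely inside $H_n$. Hence $\partial H_n$ meets the interior of $B_n$, so $\dist(p, \partial H_n) < 1/n$. Writing $H_n = \{x : \langle u_n, x\rangle \ge c_n\}$ with $u_n \in S^{d-1}$, this reads $|c_n - \langle u_n, p\rangle| < 1/n$.

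Next I would pass to the limit. By compactness of $S^{d-1}$ I may assume $u_n \to u$, whence $c_n \to \langle u, p\rangle =: c$ and $H_n$ converges to $H = \{x : \langle u, x\rangle \ge c\}$, for which $p \in \partial H$. Because every $\mu_i$ is absolutely continuous, the limiting hyperplane $\partial H$ is $\mu_i$-null and the map $(u, c) \mapsto \mu_i(\{x : \langle u, x\rangle \ge c\})$ is continuous at $(u, c)$; therefore $\mu_i(H_n) \to \mu_i(H)$ for each $i$. Taking the limit in $\mu_1(H_n) = \dots = \mu_d(H_n)$ gives $\mu_1(H) = \dots = \mu_d(H)$, which is the desired conclusion.

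It remains to dispose of the boundary case $p \in \partial K$, which is easy: I would pick a supporting hyperplane of $K$ through $p$ and let $H$ be the closed halfspace it bounds that avoids $\inte K$; then $p \in \partial H$ while $\operatorname{supp}\mu_i \cap H \subset \partial H$, so $\mu_i(H) = 0$ for all $i$ by absolute continuity and the equalities hold trivially. I expect the only real difficulty to be the limiting step: one must ensure the cutting hyperplanes neither degenerate nor run off to infinity, and that the common cut value does not collapse to $0$ before the boundaries reach $p$. This is exactly guaranteed by the fact that Theorem~\ref{thm:fraction-cut} delivers the common value in $[\varepsilon_n, 1/2]$, bounded away from both $0$ and $1$.
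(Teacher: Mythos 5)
Your proof is correct and follows exactly the route the paper intends: the authors derive this corollary in one line ``by considering one measure concentrated near a point,'' which is precisely your approximation of $p$ by normalized Lebesgue measure on shrinking balls, the two-sided bound $[\varepsilon_n,1/2]$ forcing $\partial H_n$ through $B_n$, and the limiting argument. The only detail worth a remark is that the definition of $\varepsilon$-not-permuted forbids a specific ascending chain and is therefore label-sensitive, so the ball measure should be placed last in the tuple (so that the forbidden chain would force $\mu_0^{(n)}(H)>0$ while the other measures are small); this is harmless since the conclusion of Theorem~\ref{thm:fraction-cut} is invariant under relabeling.
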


In Section~\ref{discrete-sec} we consider a discrete version of Theorem~\ref{thm:fraction-cut}, replacing measures by finite point sets. This is in accordance with the initial statement of the problem in~\cite{bereg2012balanced}.

Finally, in Section~\ref{convex-cut-sec} we consider a problem of cutting the same \emph{prescribed} fraction of every measure, this time allowing cutting with a convex subset of $\mathbb R^d$.

\medskip
{\bf Acknowledgments.} 
We thank Nikolay~Dolbilin for drawing our attention to this problem. We also thank Fedor Petrov, Imre B\'ar\'any, and Pavle Blagojevi\'c for useful remarks.

\section{Ham sandwich theorem for charges}

In order to prove Theorem~\ref{thm:fraction-cut} we need a version of the ``ham sandwich'' theorem~\cite{stone1942generalized,steinhaus1945division} for charges: 

\begin{definition}
A difference $\rho = \mu' - \mu''$ of two absolutely continuous finite measures on $\mathbb R^d$ is called a \emph{charge}. In other words, the charge is represented by its density from $L_1(\mathbb R^d)$. 
\end{definition}

\begin{theorem}[Ham sandwich for charges]
\label{thm:ham-sandwich}
Suppose we are given $d$ charges $\rho_1$, $\dots$, $\rho_d$ in $\mathbb R^d$, then there exists a (possibly degenerate) halfspace $H$ such that for any $i$
\[
\rho_i(H) = 1/2 \rho_i(\mathbb R^d).
\]
\end{theorem}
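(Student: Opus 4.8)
The plan is to reduce the statement to the classical Borsuk--Ulam theorem, mimicking the standard proof of the ham sandwich theorem and observing that positivity of the measures is never used. First I would parametrize the (possibly degenerate) halfspaces of $\mathbb R^d$ by the sphere $S^d$. Embedding $\mathbb R^d\hookrightarrow\mathbb R^{d+1}$ via $x\mapsto\hat x=(x,1)$, to each unit vector $v=(v_1,\dots,v_{d+1})\in S^d$ I associate the halfspace
\[
H_v=\{x\in\mathbb R^d:\langle \hat x, v\rangle\le 0\}.
\]
When $(v_1,\dots,v_d)\ne 0$ this is a genuine halfspace, whereas the two poles $v=(0,\dots,0,\pm 1)$ give the degenerate halfspaces $\mathbb R^d$ and $\emptyset$; this is exactly where the word ``degenerate'' in the statement comes from.

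Next I would define $f=(f_1,\dots,f_d)\colon S^d\to\mathbb R^d$ by
\[
f_i(v)=\rho_i(H_v)-\tfrac12\,\rho_i(\mathbb R^d),
\]
and the two properties to verify are that $f$ is continuous and odd. Continuity is the main technical point: writing $\rho_i=\mu_i'-\mu_i''$ and representing $\rho_i$ by a density $g_i\in L_1(\mathbb R^d)$, I note that if $v_n\to v$ in $S^d$ then $\mathbf 1_{H_{v_n}}\to\mathbf 1_{H_v}$ pointwise off the boundary hyperplane of $H_v$, a set of Lebesgue measure zero; dominated convergence with dominating function $|g_i|$ then yields $\rho_i(H_{v_n})\to\rho_i(H_v)$.

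For oddness, the antipode $-v$ produces the complementary halfspace $H_{-v}=\{x:\langle\hat x,v\rangle\ge 0\}$, and since the common boundary hyperplane is $g_i$-null by absolute continuity, $\rho_i(H_v)+\rho_i(H_{-v})=\rho_i(\mathbb R^d)$ for every $i$; this identity persists at the poles, where $H_v$ and $H_{-v}$ are $\mathbb R^d$ and $\emptyset$. Hence $f_i(-v)=-f_i(v)$, so $f$ is an odd continuous map $S^d\to\mathbb R^d$. By the Borsuk--Ulam theorem it has a zero $v^\ast$, and $f(v^\ast)=0$ means precisely $\rho_i(H_{v^\ast})=\tfrac12\rho_i(\mathbb R^d)$ for all $i$, giving the desired halfspace.

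I expect no serious obstacle beyond the continuity verification. The whole point is that the Borsuk--Ulam argument uses only the additive relation $\rho_i(H_v)+\rho_i(H_{-v})=\rho_i(\mathbb R^d)$ together with continuity in $v$, and neither of these requires the measures $\mu_i', \mu_i''$ to enter with a definite sign; so the classical proof carries over to charges verbatim, and the target value $\tfrac12\rho_i(\mathbb R^d)$ may be any real number without affecting the argument.
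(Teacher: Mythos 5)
Your proposal is correct and follows essentially the same route as the paper: parametrize (possibly degenerate) halfspaces by $S^d$ via the lift $x\mapsto(x,1)$ and apply the Borsuk--Ulam theorem to the resulting odd map (your $f_i(v)=\rho_i(H_v)-\tfrac12\rho_i(\mathbb R^d)$ is just half of the paper's coordinate $\rho_i(H)-\rho_i(\mathbb R^d\setminus H)$). The only difference is that you spell out the continuity and oddness verifications, which the paper leaves implicit.
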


\begin{remark}
A \emph{degenerate halfspace} is either $\emptyset$ of the whole $\mathbb R^d$. We cannot exclude degenerate halfspaces in this theorem when $\rho_i(\mathbb R^d)$'s are all zero. The reason is the same for which we cannot exclude the ``$\varepsilon$-not-permuted'' assumption from Theorem~\ref{thm:fraction-cut}, see also Remark~\ref{counterexample} below.
\end{remark}

\begin{proof}
The classical proof from the book of Matou\v{s}ek~\cite{matousek2003using} passes for charges (the authors learned this fact long ago from Vladimir~Dol'nikov). Identify $\mathbb R^d$ with $\mathbb R^d\times \{1\} \subset \mathbb R^{d+1}$. Parameterize halfspaces $\tilde H\subset \mathbb R^{d+1}$ with boundary passing through the origin by their inner normals. So all halfspaces $H = \tilde H\cap \mathbb R^d\times \{1\}$ (including degenerate) are parameterized by the unit sphere $S^d$.

If we map every halfspace to the vector 
\[
(\rho_1(H) - \rho_1(\mathbb R^d\setminus H), \dots, \rho_d(H) - \rho_d(\mathbb R^d\setminus H))
\]
then we obtain a continuous odd map $P : S^d\to \mathbb R^d$; by the Borsuk--Ulam theorem~\cite{borsuk1933drei} (see also~\cite{matousek2003using}) one halfspace must be mapped to zero.
\end{proof}

\section{Proof of Theorem~\ref{thm:fraction-cut}}

As the first attempt we try to apply the ham sandwich theorem for charges to 
\[
\rho_1 = \mu_1 - \mu_0,\, \rho_2 = \mu_2 - \mu_1,\, \dots,\, \rho_d = \mu_d - \mu_{d-1}.
\]
This way we easily obtain a halfspace $H$ such that $\mu_0(H) = \mu_1(H) = \dots = \mu_d(H)$. But the halfspace $H$ may be degenerate or $\mu_i(H)$ may be all zero. This is not what we want.

\begin{remark}
\label{counterexample}
By the way, we see that starting from three measures $\mu_i$ on $\mathbb R^2$ distributed along three rays emanating from vertices of a regular triangle and going outside it, we cannot cut the same fraction (possibly zero) of these measures by a non-degenerate halfplane. The same example generalizes to higher dimensions and shows that in the ham sandwich theorem for charges we cannot avoid using degenerate halfspaces when all charges satisfy $\rho_i(\mathbb R^d) = 0$.
\end{remark}

So let us perturb the charges with a small positive parameter $s$:
\[
\rho^s_i = (1+s)\mu_i - \mu_{i-1},\, \text{for }i=1, \dots, d.
\]
Now Theorem~\ref{thm:ham-sandwich} gives a halfspace $H$ with:
\begin{equation}
\label{eq:equal-part}
\rho^s_1(H) = \dots = \rho^s_d(H)=s/2.
\end{equation}
This is equivalent to the following equalities:
\begin{equation}
\label{eq:s-part}
\mu_{i-1}(H) = (1+s)\mu_i(H) - s/2 = \mu_i(H) + s (\mu_i(H) - 1/2).
\end{equation}
Suppose  $\mu_i(H) < \varepsilon$ for all $i=0, \dots, d$.
Then we have $\mu_i(H) < 1/2$ and $s (\mu_i(H) - 1/2)<0$. Hence
\begin{equation}
\label{eq:ths-ineq}
\mu_{i-1}(H)<\mu_i(H),\, \text{for }i=1, \dots, d.
\end{equation}

This contradicts the assumption that the measures are \emph{$\varepsilon$-not-permuted}.

Hence the inequality $\mu_i(H)\ge \varepsilon$ for some $i$ is guaranteed while we decrease $s$ to $0$, and in turn it guarantees (remember that we can interchange $H$ and $\mathbb R^d\setminus H$!) that $H$ cannot approach degenerate halfspaces $\emptyset$ and $\mathbb R^d$. So we assume by compactness that $H$ tends to a certain halfspace as $s\to 0$ and going to the limit in (\ref{eq:s-part}) together with (\ref{eq:equal-part}) we obtain the conclusion.

\section{Discrete version}
\label{discrete-sec}

In the paper \cite{bereg2012balanced} Bereg and Kano consider a discrete version of this theorem in the plane. They call a line $\ell$ \emph{balanced} if each half-plane bounded by $\ell$ contains precisely the same number of points of each color.
\begin{theorem}[S.~Bereg and M.~Kano, \cite{bereg2012balanced}]
	\label{thm:bereg kano}
	Let $S$ be a set of $3n \geq 6$ points in the plane in general position colored in red/blue/green such that\\
	(i) the number of points of each color is $n$;\\
	(ii) the vertices of the convex hull of $S$ have the same color.\\
	Then there exists a balanced line of $S$.
\end{theorem}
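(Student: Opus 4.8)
The plan is to derive the discrete statement from the continuous Theorem~\ref{thm:fraction-cut} by smoothing each point into a tiny uniform blob and then letting the blobs shrink. After relabeling the colors we may assume that every vertex of $\conv S$ is red; since $S$ is in general position (no three points collinear), no non-red point lies on an edge of $\conv S$, and hence all blue and green points lie in $\inte\conv(\text{red points})$ at some fixed positive distance $\eta$ from the boundary. For a small radius $\delta>0$ I would replace each point by the uniform distribution on the disk of radius $\delta$ about it, scaled to have mass $1/n$, obtaining absolutely continuous probability measures $\mu^\delta_R,\mu^\delta_B,\mu^\delta_G$ (each color carries $n$ points of mass $1/n$). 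Label them $\mu_0=\mu^\delta_B$, $\mu_1=\mu^\delta_G$, $\mu_2=\mu^\delta_R$. For every $\delta<\eta/2$ the support of $\mu_0$ stays inside $\inte\conv(\operatorname{supp}\mu_1\cup\operatorname{supp}\mu_2)$, so by the remark on the natural example these three measures are $\varepsilon$-not-permuted; crucially the gap $\eta$ is independent of $\delta$, so a single $\varepsilon_0\in(0,1/2)$ works for all small $\delta$.

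Applying Theorem~\ref{thm:fraction-cut} yields, for each small $\delta$, a half-plane $H_\delta$ with
\[
\mu^\delta_R(H_\delta)=\mu^\delta_B(H_\delta)=\mu^\delta_G(H_\delta)=c_\delta\in[\varepsilon_0,1/2].
\]
Parametrizing half-planes by a unit normal and an offset, and using $c_\delta\ge\varepsilon_0$ to keep $H_\delta$ away from the degenerate cases, I would pass to a subsequence $\delta\to 0$ along which $H_\delta$ converges to a half-plane $H_0$ with boundary line $\ell$ and $c_\delta\to c_0\in[\varepsilon_0,1/2]$. In the limit each disk collapses to its center: a point strictly inside $H_0$ eventually contributes its full mass $1/n$, a point strictly outside contributes $0$, and a point lying on $\ell$ contributes a limiting fraction in $[0,1/n]$. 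Writing $A_C$ for the number of color-$C$ points strictly inside $H_0$ and $N_C$ for the number on $\ell$, we get $n c_0=A_C+\theta_C$ with $\theta_C\in[0,N_C]$. If $\ell$ misses every point of $S$ then $\theta_C=0$, so $A_R=A_B=A_G=nc_0$, and the same applied to the complementary half-plane shows $\ell$ is balanced; note $nc_0$ is then a positive integer since $c_0\ge\varepsilon_0>0$.

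\emph{The main difficulty is the case when $\ell$ passes through points of $S$.} General position forces at most two points on $\ell$, so at least one color has no point on $\ell$; for that color $nc_0=A_C$, which shows $nc_0$ is an integer. Consequently, for each color the equation $nc_0=A_C+\theta_C$ forces $\theta_C=k_C:=nc_0-A_C$ to be an integer with $0\le k_C\le N_C$, meaning exactly $k_C$ of the color-$C$ points on $\ell$ should end up on the $H_0$-side. Since the (at most two) on-line points are distinct, the two-parameter family of lines near $\ell$ has full rank in their signed positions, so it realizes every assignment of these boundary points to the two sides while keeping all other points strictly on their original sides; I would pick the perturbation placing exactly $k_C$ of the on-line color-$C$ points on the $H_0$-side. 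The resulting line then has $A_C+k_C=nc_0$ points of each color strictly on one side and $n-nc_0$ on the other, hence is balanced. Verifying that this prescribed side-assignment of the boundary points is simultaneously achievable is the only delicate step; everything else is a routine smoothing-and-limit argument built on top of Theorem~\ref{thm:fraction-cut}.
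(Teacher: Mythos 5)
Your argument follows essentially the paper's own route: Theorem~\ref{thm:bereg kano} is the planar instance of Corollary~\ref{cor:points-convex}, and the paper proves the general Theorem~\ref{thm:fraction-cut discrete} by exactly your scheme --- thicken the points into small balls, cut the same fraction of the resulting measures with a lower bound on that fraction that is uniform in the radius, pass to the limit, and perturb the limiting line to distribute the at most $d$ points it meets between the two sides; your limit and rounding steps are carried out correctly. The one imprecise step is the verification of the $\varepsilon$-not-permuted hypothesis: with your labeling $\mu_0=\mu_B$, $\mu_1=\mu_G$, $\mu_2=\mu_R$, the fact that the support of $\mu_0$ lies in $\inte\conv(\operatorname{supp}\mu_1\cup\operatorname{supp}\mu_2)$ does not by itself exclude the ascending chain $\mu_0(H)<\mu_1(H)<\mu_2(H)$ that the proof of Theorem~\ref{thm:fraction-cut} must rule out, because that chain permits $\mu_0(H)=0$; the example of Remark~\ref{re:not-permutation} only works when the interior measure occupies a position $i\ge 1$, since the chain forces $\mu_i(H)>0$ precisely for those positions. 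Your claim is nevertheless true (with, say, $\varepsilon_0=1/(2n)$, uniformly for $\delta<\eta/2$) because the green measure $\mu_1$ is \emph{also} supported at depth $\eta$ inside the hull of the red points: $\mu_1(H)>0$ forces $H$ to contain an entire red disk and hence $\mu_2(H)\ge 1/n$, so the chain is impossible --- a one-line repair, after which the rest of your proof stands.
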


Here we generalize the result of Bereg and Kano as follows:

\begin{theorem}
	\label{thm:fraction-cut discrete}
	Let $S$ be a set of $(d+1)n$ points in $\mathbb{R}^d$ in general position colored in colors $0, 1, \dots, d$ so that:\\
	(i) the number of points of each color is $n$;\\
	(ii) for any directed line $\ell$ there exist two colors $i$ and $j$, $i<j$ such that the farthest in the direction of $\ell$ point of color $i$ is not closer (in the direction of $\ell$) than any point of color~$j$. \\
	Then there exists a \emph{balanced} hyperplane $h$, in other words, the hyperplane such that each half-space bounded by $h$ contains precisely the same number of points of each color.
\end{theorem}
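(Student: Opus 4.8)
The plan is to reduce the discrete statement to the continuous Theorem~\ref{thm:fraction-cut} by smearing each point into a tiny blob of mass. Fix a parameter $\delta>0$ and replace each point $p\in S$ of color $i$ by the normalized uniform measure on the ball $B(p,\delta)$, obtaining an absolutely continuous probability measure $\mu_i^\delta$ carried by the $n$ blobs of color $i$, each of mass $1/n$. For a halfspace $H=\{x:\langle x,v\rangle\ge c\}$ one has
\[
\mu_i^\delta(H)=\frac1n\sum_{p\text{ of color }i} g\!\left(\frac{\langle p,v\rangle-c}{\delta}\right),
\]
where $g$ is the increasing fraction of a unit ball lying above a hyperplane at signed distance $t$, with $g(t)=0$ for $t\le-1$ and $g(t)=1$ for $t\ge1$.

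First I would check that hypothesis (ii) turns the $\mu_i^\delta$ into $\varepsilon$-not-permuted measures once $\varepsilon$ and $\delta$ are small. The key observation is that if every $\mu_i^\delta(H)<\varepsilon\le 1/n$, then no blob lies fully inside $H$, so every point satisfies $\langle p,v\rangle<c+\delta$; hence $c$ lies within $\delta$ of the global maximum in the direction $v$, and only points within $2\delta$ of the top in the direction $v$ contribute to any $\mu_i^\delta(H)$. For a generic direction the topmost point is unique, so for small $\delta$ a single color has positive measure and the values $\mu_i^\delta(H)$ cannot form a strictly increasing chain. A compactness argument over $v\in S^{d-1}$, using general position so that ties at the top occur only on a lower-dimensional set of directions and only force \emph{equalities} among the leading contributions, upgrades this to uniform $\varepsilon,\delta$: a strictly increasing chain $\mu_0^\delta(H)<\dots<\mu_d^\delta(H)$ with all terms below $\varepsilon$ would, in the limit $\delta\to0$, force $\max_{\text{color }0}\langle\cdot,v\rangle\le\dots\le\max_{\text{color }d}\langle\cdot,v\rangle$ with at least one strict inequality, contradicting~(ii).

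Granting this, Theorem~\ref{thm:fraction-cut} yields for each small $\delta$ a halfspace $H^\delta$ with $\mu_0^\delta(H^\delta)=\dots=\mu_d^\delta(H^\delta)=t_\delta\in[\varepsilon,1/2]$. Passing to a subsequence, $H^\delta\to H_0$ with bounding hyperplane $h_0$, normal $v_0$, and $t_\delta\to t_0\in[\varepsilon,1/2]$. Writing $a_i$ for the number of color-$i$ points strictly inside $H_0$ and $b_i$ for the number on $h_0$, the limit of the equal-mass condition reads $a_i+S_i=t_0 n$ for every $i$, where $S_i\in[0,b_i]$ collects the limiting blob fractions of the color-$i$ boundary points. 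General position gives $\sum_i b_i\le d$, so some color has $b_i=0$; for that color $S_i=0$ and $a_i=t_0n$, which forces $k:=t_0 n$ to be a positive integer and shows $a_i=k$ for \emph{every} color with no boundary point. For a color with $b_i>0$ one then gets $S_i=k-a_i\in\{0,1,\dots,b_i\}$, an integer, recording that exactly $k-a_i$ of its boundary points must end up inside.

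The main obstacle is to realize all these choices simultaneously by one hyperplane, and this is the genuinely discrete part of the argument. I would tilt $h_0$ along the direction in which the approximating hyperplanes approached it: writing the approximating normals as $v^\delta=v_0+\eta_\delta w+o(\eta_\delta)$ with $\eta_\delta\downarrow0$, the sign of $\langle p,w\rangle$ records for each boundary point $p$ whether its blob ended up inside or outside in the limit. In the generic case every limiting fraction is $0$ or $1$, so $S_i$ simply counts the color-$i$ boundary points with $\langle p,w\rangle>0$; since $S_i=k-a_i$, the hyperplane with normal $v_0+\eta w$ for small $\eta>0$ passes through no point of $S$, leaves every non-boundary point on its original side, and contains exactly $k$ points of each color—a balanced hyperplane. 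The remaining difficulty is a higher-order degeneracy, in which some boundary point has a limiting fraction strictly between $0$ and $1$ (equivalently several boundary points satisfy $\langle p,w\rangle=0$); then I would repeat the tilting argument on this smaller, lower-dimensional sub-configuration, the recursion terminating because $S$ is finite and general position rules out a completely degenerate limit.
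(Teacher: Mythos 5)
Your overall strategy---smearing the points into small balls, feeding the resulting measures into the continuous machinery, and then rounding off the at most $d$ boundary points at the end---is the same as the paper's, and your limit analysis (integrality of $k=t_0n$, choosing how many boundary points of each color go inside) is essentially the paper's final perturbation step in more elaborate form. The genuine gap is in the first step, where you verify that the measures $\mu_i^\delta$ are $\varepsilon$-not-permuted. Writing $M_i(v)=\max_{p\text{ of color }i}\langle p,v\rangle$, you claim a strictly increasing chain forces $M_0(v)\le\dots\le M_d(v)$ ``with at least one strict inequality, contradicting (ii)''. But (ii) only asserts that \emph{some} pair $i<j$ has $M_i(v)\ge M_j(v)$, and the limit configuration $M_0(v)\le M_1(v)=\dots=M_d(v)$ satisfies (ii) by a tie; ties are not excluded by (ii) (it says ``not closer''), nor, for $d\ge 3$, by general position. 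Moreover, a tie $M_i(v)=M_j(v)$ does \emph{not} ``only force equalities among the leading contributions'': if color $j$ has two points within $2\delta$ of the top in direction $v$ while color $i$ has one, then $\mu_j^\delta(H)$ is roughly twice $\mu_i^\delta(H)$ with both still below $\varepsilon$, so the strict inequality $\mu_i^\delta(H)<\mu_j^\delta(H)$ survives. The contradiction you point to is simply not there.

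The correct way to close this---and the paper's way---is a counting argument rather than an appeal to (ii) alone. If all values are below $\varepsilon\le\frac{1}{2n}$ and the chain is strictly increasing, then no ball lies inside $H$ and every color $1,\dots,d$ has at least one ball meeting $\partial H$ (its measure is positive but less than $1/n$); furthermore, for the pair $i<j$ supplied by (ii) one has $\mu_i^\delta(H)\ge \frac1n\,g\bigl(\frac{M_i-c}{\delta}\bigr)\ge \frac1n\,g\bigl(\frac{M_j-c}{\delta}\bigr)$ while $\mu_j^\delta(H)\le \frac{k_j}{n}\,g\bigl(\frac{M_j-c}{\delta}\bigr)$, where $k_j$ is the number of color-$j$ balls meeting $\partial H$; so $\mu_i^\delta(H)<\mu_j^\delta(H)$ forces $k_j\ge 2$. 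That puts at least $d+1$ points of $S$ in a slab of width $2\delta$ around $\partial H$. Since $S$ is finite and no $d+1$ of its points lie on a common hyperplane, there is a $w_0>0$ such that no slab of width $w_0$ contains $d+1$ points, and $\delta<w_0/2$ gives the contradiction (equivalently, in the paper's phrasing: a hyperplane meets at most $d$ balls, so two colors have measure zero, which is incompatible with a strict chain). With this counting step inserted in place of the claimed contradiction with (ii), the rest of your argument goes through.
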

\begin{proof}
	The proof follows almost directly from the continuous version. Replace each point by a ball solid ball centered in it and of radius $r>0$ sufficiently small so that there is no hyperplane that intersects any $d+1$ balls at the same time and for any partition of points of $s$ by a hyperplane there exist a hyperplane that separates the corresponding balls accordingly. Since the points are in general position such $r$ does exist.
	
	Balls of each color generate an absolutely continuous measure. So we have $d+1$ absolutely continuous measures; multiplying them by the same constant we make these measures probabilistic, denote them by $\mu_0$, $\mu_1$, \dots, $\mu_d$.
	
	Now as in the proof of Theorem \ref{thm:fraction-cut} we consider charges $\rho^s_i = (1+s)\mu_i - \mu_{i-1}$ and find a halfspace $H$ such that $\rho^s_1(H) = \dots = \rho^s_d(H)=s/2.$ 

We will show that $\mu_i(H)$ could not be less than $1/n$ for all $i$. Suppose it is so. Then again using equation \eqref{eq:s-part} we get:
	\begin{equation}
		\label{eq:ths-ineq second}
	\mu_{i-1}(H)<\mu_i(H),\, \text{for }i=1, \dots, d.
	\end{equation}

Consider a line $\ell$ inner normal to $H$ and colors $i$ and $j$ from condition (ii) for this line. Suppose $\mu_{i}(H)<\mu_j(H)<1/n$. The hyperplane $\partial H$ intersects at least two balls of color $j$, otherwise the inequality would be $\mu_{i}(H)\ge \mu_j(H)$ by condition (ii). Under the above assumptions, every hyperplane can pass through at most $d$ balls therefore there exist two colors $k$ and $m$ that do not intersect with $\partial H$ and therefore with $H$ (in the opposite case $\mu_k(H)$ or $\mu_m(H)$ would be at least $1/n$). Hence we have $\mu_k(H) = \mu_m(H) = 0$ in contradiction with \eqref{eq:ths-ineq second}. So we for at least one $i$ the measure $\mu_i(H)$ is at least $1/n$.

As $s$ goes to $0$, the halfspace $H$ tends to a certain halfspace $H_0$. Its border hyperplane $h$ cuts equal positive fraction of every measure $\mu_i$.

If $h$ touches no ball where the measures are concentrated then we are done. But a problem can occur if $h$ intersects some balls. Since the points in $S$ are in general position, the hyperplane $h$ touches at most $d$ of them (denote the corresponding subset of $X$ by $I$) and we can perturb $h$ so that arbitrary subset $J\subseteq I$ will be on one side of $h$ while $i\setminus J$ will be on the other side of $h$. Thus we may ``round'' the fractions of the measure in any way we want and equalize the numbers of points in $H$ for all the colors (compare with the proof of Corollary 3.1.3 in \cite{matousek2003using}).
\end{proof}

\begin{remark}
	\label{re:for fraction discrete}
	As in remark \ref{re:not-permutation} we can describe point sets satisfying condition (ii) in terms of permutations: For any directed line $\ell$ the order of points with least (among the point of the same color) projection to $\ell$ gives a permutation of colors $\{0, 1, \dots, d\}$; a colored point set satisfies condition (ii) if and only if we cannot get all permutations this way. 
\end{remark}

Let us give a simpler but still powerful sufficient assumption on $S$:

\begin{corollary}
	\label{cor:points-convex}
	Let $S$ be a set of $(d+1)n$ points in $\mathbb{R}^d$ in general position colored in colors $0, 1, \dots, d$ so that:\\
	(i) the number of points of each color is $n$;\\
	(ii) points of one color lie in the convex hull of the union of points of other $d$ colors.\\
	Then there exists a \emph{balanced} hyperplane $h$.
\end{corollary}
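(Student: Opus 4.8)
The plan is to deduce the corollary directly from Theorem~\ref{thm:fraction-cut discrete}: I only need to verify that assumption~(ii) of the corollary implies assumption~(ii) of that theorem, after which the theorem delivers the balanced hyperplane and we are done. Since assumption~(i) and the conclusion are both invariant under permuting the colors, I would first relabel so that the color contained in the convex hull of the union of the remaining ones receives the \emph{largest} index $d$; the reason for this particular choice will become clear in a moment.

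Write $C=\conv\left(\bigcup_{i<d}\{\text{points of color }i\}\right)$, so that by assumption every point of color $d$ lies in $C$. Fix an arbitrary directed line $\ell$ with direction vector $u$. For each point $p$ of color $d$ the inclusion $p\in C$ gives $\langle u,p\rangle\le\max_{q}\langle u,q\rangle$, where $q$ ranges over the points of colors $0,\dots,d-1$; indeed a point of a convex hull cannot exceed the generating set on any linear functional. Hence the maximal projection of $S$ onto $\ell$ is attained by a point of some color $i\ne d$, and this point, being the farthest point of $S$ in the direction of $\ell$, is in particular the farthest point of its own color $i$ and has projection at least that of every point of color $d$. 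Taking $j=d$ I will have exhibited two colors $i<j$ for which the farthest point of color $i$ is not closer (in the direction of $\ell$) than any point of color $j$, which is exactly assumption~(ii) of Theorem~\ref{thm:fraction-cut discrete}. As $\ell$ was arbitrary, that assumption holds and the theorem applies.

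I expect no analytic difficulty here: the entire argument rests on the elementary observation that a point of $\conv X$ cannot strictly dominate $X$ in any direction. The only step demanding care is the bookkeeping of indices. The theorem's condition~(ii) insists on a pair $i<j$, and the witnessing color $i$ is forced to be the one attaining the global maximum projection, which is never the interior color. Assigning the interior color the maximal label $d$ guarantees $i<j=d$ automatically; any other labeling could produce a dominating color $i$ larger than the interior color, and the required inequality $i<j$ would fail. One could alternatively phrase the same verification through the permutation description of Remark~\ref{re:for fraction discrete}: the interior color can never occupy the extremal position, so not all permutations are realizable.
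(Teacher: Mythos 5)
Your proposal is correct and is exactly the derivation the paper intends (the paper states Corollary~\ref{cor:points-convex} without proof as an immediate consequence of Theorem~\ref{thm:fraction-cut discrete}): since a point of a convex hull cannot exceed the generating set on any linear functional, the color contained in the hull is never the strict maximizer in any direction, so labeling it $d$ makes condition~(ii) of the theorem hold with $j=d$. Your remark about why the interior color must receive the \emph{largest} label is the one genuinely non-automatic point, and you handle it correctly.
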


\section{Cutting a prescribed fraction by a convex set}
\label{convex-cut-sec}

Let us state another problem about cutting the same fraction of several measures:

\begin{problem}
\label{prob:cutting-alpha}
The dimension $d$ and the number of measures $k>1$ are given. For which $\alpha>0$ for any  absolutely continuous probability measures $\mu_1$, $\dots$, $\mu_k$ on $\mathbb R^d$ it is always possible to find a convex subset $C\subset \mathbb R^d$ such that
\[
\mu_1(C) = \dots = \mu_k(C)=\alpha?
\]
\end{problem}

For $\alpha=1/2$ and $k=d$ a positive solution to this problem follows from the ham sandwich theorem.  If $\alpha>1/2$ then considering two measures, one concentrated near the origin and the other concentrated uniformly near a unit sphere, we see that there is no solution. 

If $k>d+1$ one can consider $d+2$ measures concentrated near the vertices of a simplex $S$ and the mass center $w$ of $S$; in this case any such $C$ must contain a neighborhood of $w$ and therefore cannot cut $\alpha$ of the corresponding measure.

In~\cite{stromquist1985sets} Stromquist and Woodall solved a similar problem for $k$ measures on a circle and cutting it by a union of $k$ arcs.

Using the results on measure equipartitions from~\cite{soberon2010balanced,aronov2010convex,karasev2010equipartition} we are able to solve Problem \ref{prob:cutting-alpha} for $d+1$ measures:

\begin{theorem}
\label{cutting-one-kth}
Suppose $\mu_0$, $\mu_1$, $\dots$, $\mu_d$ are absolutely continuous probability measures on $\mathbb R^d$ and $\alpha \in (0, 1)$. It is always possible to find a convex subset $C\subset \mathbb R^d$ such that
\[
\mu_0(C)=\mu_1(C) = \dots = \mu_d(C)=\alpha,
\]
if and only if $\alpha = 1/m$ for a positive integer $m$.
\end{theorem}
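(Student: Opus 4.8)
The two implications require rather different ideas, so the plan is to treat them separately. For the direction asserting that $\alpha=1/m$ \emph{suffices}, I would exploit the convex equipartition results cited above. Discard $\mu_0$ and apply the equipartition theorem to the $d$ measures $\mu_1,\dots,\mu_d$ in $\mathbb R^d$: for every integer $m$ there is a partition of $\mathbb R^d$ into convex parts $C_1,\dots,C_m$ with $\mu_j(C_i)=1/m$ for all $j\ge 1$ and all $i$. Since $\sum_{i=1}^m\mu_0(C_i)=1$, the values $\mu_0(C_i)$ average to $1/m$, so some part has $\mu_0\le 1/m$ and another has $\mu_0\ge 1/m$. If equality occurs for some part we are done; otherwise I would vary the equipartition continuously and apply the intermediate value theorem to the value of $\mu_0$ on a distinguished part.

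The model case $d=1$ shows why this should work: there one fixes the $m$-point equipartition of $\mu_1$ and slides a single interval $J$ keeping $\mu_1(J)=1/m$; this sliding family is connected and runs through all $m$ grid intervals, whose $\mu_0$-masses average $1/m$, so $\mu_0(J)=1/m$ somewhere. In higher dimensions the analogue is a connected family of convex regions keeping $\mu_1=\dots=\mu_d=1/m$ while $\mu_0$ ranges from below to above $1/m$. I expect the principal obstacle of the positive part to be precisely the existence of such a family (equivalently, the connectivity of the space of equipartitions together with the $S_m$-action relabelling the parts): for fixed generic measures the set of equipartitions can be zero-dimensional, so a naive intermediate value argument fails and one must invoke the finer topological structure supplied by the proofs in the cited references rather than use them as a black box.

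For the \emph{necessity} of $\alpha=1/m$ I would first settle the one-dimensional case $d=1$, where convex sets are intervals and the mechanism is transparent. Take $\mu_0$ uniform on $[0,1]$ and let $\mu_1$ have a density that is periodic with period $\alpha$ on $[0,1]$. Then every interval $I$ with $\mu_0(I)=\alpha$ spans exactly one full period of $\mu_1$, so $\mu_1(I)$ equals the one-period mass $c$, the same value for all such $I$ (including the boundary intervals that run off the ends of $[0,1]$). The total mass constraint reads $1=\lfloor 1/\alpha\rfloor\,c+\ell$, where $\ell$ is the $\mu_1$-mass of the leftover stretch $[\lfloor 1/\alpha\rfloor\alpha,1]$. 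When $1/\alpha\in\mathbb Z$ the leftover is empty and $c=\alpha$ is forced, in agreement with the positive part; when $1/\alpha\notin\mathbb Z$ the leftover has positive length, leaving room to normalise to total mass $1$ while keeping $c\ne\alpha$. For such $\mu_1$ no interval satisfies $\mu_0(I)=\mu_1(I)=\alpha$, and the gap $|c-\alpha|$ is strict.

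It remains to realise this obstruction by honest absolutely continuous measures in $\mathbb R^d$, and this is where the real work of the negative part lies. The naive idea of concentrating all measures in a thin slab around a segment fails: a convex set may capture a horizontal sub-slab and thereby grab the same fraction of every measure that shares the transverse profile, so such a configuration imposes no constraint at all — and, by the positive part, for $\alpha=1/m$ it must not. Hence the transverse directions have to be used to \emph{distinguish} the measures: for instance one lets $\mu_0,\mu_1$ carry the one-dimensional obstruction while the remaining measures $\mu_2,\dots,\mu_d$ are positioned so as to forbid the degenerate slab captures, and then argues that for a sufficiently concentrated configuration the achievable vector $(\mu_0(C),\dots,\mu_d(C))$ stays within a small neighbourhood of the one-dimensional value set, which by strictness of the gap misses $(\alpha,\dots,\alpha)$. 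Making this perturbation estimate precise for the full, genuinely $d$-dimensional family of convex sets is the main difficulty I anticipate in the whole argument.
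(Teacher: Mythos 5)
Your proposal for the positive direction contains the gap you yourself flag, and it is fatal to that route: nothing in the cited equipartition results gives you a \emph{connected} family of convex equipartitions of $\mu_1,\dots,\mu_d$ along which an intermediate value argument for $\mu_0$ could run (the existence proofs are equivariant-topological and say nothing about the topology of the solution space, and even granting connectivity you would have to handle the monodromy that permutes the parts). The paper avoids this entirely by a different device: it applies Karasev's equipartition theorem to \emph{all} $d+1$ measures at once, using the $(d+2)$-dimensional function space $L=\{a_0+\sum a_ix_i+b\sum x_i^2\}$ (reducing to prime $m$ via Sober\'on's induction). This produces a generalized Voronoi (power-diagram) partition into $m$ parts, each carrying $1/m$ of every $\mu_i$; the parts need not all be convex, but the one whose function has the \emph{largest} coefficient $b$ is cut out by inequalities $(b_1-b_j)\sum x_i^2+\lambda_j(x)\le 0$ with $b_1-b_j\ge 0$, i.e.\ is an intersection of balls and halfspaces, hence convex. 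That single convex cell is the desired $C$; no deformation or intermediate value step is needed. Without this (or an equivalent) idea your positive direction does not close.

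For the necessity of $\alpha=1/m$, your one-dimensional construction is correct and is a legitimate alternative to the paper's: a $\mu_1$-density of period $\alpha$ on $[0,1]$ forces $\mu_1(I)=c$ for every interval with $\mu_0(I)=\alpha$, and the leftover stretch of length $1-\lfloor 1/\alpha\rfloor\alpha>0$ lets you normalize with $c\ne\alpha$ (the paper instead places $n$ clumps of mass $1/n$ at the points $i/(n+1)$, so that any interval of length $\alpha>1/(n+1)$ swallows a whole clump and gets mass at least $1/n>\alpha$). But for $d>1$ you only name the difficulty. The paper resolves it with a concrete configuration: $\mu_2,\dots,\mu_d$ concentrated near vertices $v_2,\dots,v_d$ of a simplex, $\mu_0$ uniform on a thin cylinder around the edge $v_0v_1$, and $\mu_1$ on clumps along the segment joining the centers of the two facets opposite $v_0$ and $v_1$; then, letting the concentration parameter $\delta\to 0$ and extracting a Hausdorff limit $C_0$ by the Blaschke selection theorem, convexity forces $C_0$ to contain $v_2,\dots,v_d$ and a subsegment of $[v_0,v_1]$ of relative length $\alpha$, hence to engulf a full support clump of $\mu_1$, reproducing the one-dimensional contradiction for small $\delta$. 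This compactness argument replaces the quantitative perturbation estimate you anticipate, and some version of it must be supplied before the negative direction is complete.
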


\begin{proof}
First, we prove that it is possible if $\alpha=1/m$.
Following~\cite{soberon2010balanced} it is sufficient to consider the case when $m$ is a prime and use induction. In this case we are going to apply~\cite[Theorem~1.3]{karasev2010equipartition} to these measures and the space of functions 
\[
L = \left\{a_0 + \sum_{i=1}^d a_ix_i + b \sum_{i=1}^d x_i^2\right\}.
\]
This space has dimension $d+2$, which is sufficient to partition $d+1$ measures into equal parts by a generalized Voronoi partition. Recall that a generalized Voronoi partition corresponding to an $m$-tuple of pairwise distinct functions $\{f_1,\dots, f_m\} \subset L$ is defined by 
\[
C_i = \{x\in \mathbb R^d : f_i(x) \le f_j(x)\ \text{for all}\ j\neq i\}.
\]
Assume without loss of generality that $f_1(x)$ has the largest coefficient at $\sum_{i=1}^d x_i^2$ among all $f_j(x)$. Then the defining equations for $C_1$ will look like 
\[
(b_1-b_j)\sum_{i=1}^d x_i^2 + \lambda(x) \le 0,
\]
where $\lambda(x)$ is a linear function and $b_1-b_j$ is nonnegative. Note that each of these equations defines either a halfspace or a ball and therefore their intersection $C_1$ is convex.

%
%

Now we give a counterexample for $d=1$ and $\alpha$ not of the form $1/m$. Assume $\frac{1}{n}>\alpha>\frac{1}{n+1}$. Let $\mu_0$ to be the uniform measure on $(0,1)$.

Let $a_i$, $i=1, \dots, n$ be the points with coordinates $\frac{i}{n+1}$ and $\Delta_i$ be the intervals with centers at $a_i$ an length $\epsilon<\alpha-\frac{1}{n+1}$. The support of the measure $\mu_1$ is the union of intervals $\Delta_i$ and $\int_{\Delta_i}d\mu_1=\frac{1}{n}$ for each $i$ (Fig.~\ref{fig:one-dimension}).

It easy to see that each convex set $C$ with $\mu_0(C)=\alpha$ is an interval of length $\alpha$ in intersection with $(0, 1)$ and it must contain at least one interval $\Delta_i$. Therefore $\mu_1(C)\geq\frac{1}{n}>\alpha$.

For $d>1$ we can extend the one-dimensional example. Consider a $d$-dimensional regular simplex with vertices $v_0$, $v_1$, \dots, $v_d$. Let the measures $\mu_2$, \dots, $\mu_d$ concentrate near the vertices $v_2$, \dots, $v_d$ respectively. Like we did it for $d=1$ let $\mu_0$ be the uniform measure on the tiny cylinder around the edge $v_0v_1$. The measure $\mu_1$ will look like in the one-dimensional case, but its support will be intervals on the segment that connects centers of faces $v_0v_2\dots v_d$ and $v_1v_2\dots v_d$. See Fig.~\ref{fig:two-dimension} for the two-dimensional case. 
	
	\begin{center}
		\begin{center}
		\begin{tabular}{cc}
			\includegraphics{figcutting-1.mps} &
			\includegraphics{figcutting-2.mps}	\\
			\f \label{fig:one-dimension} &
			\f \label{fig:two-dimension}\\
		\end{tabular}
		\end{center}
	\end{center}

To conclude the proof we note the following. Assume that the measures $\mu_i$ are concentrated in $\delta$-neighborhoods of their respective vertices (for $i>1$) or segments (for $i=0,1$). Then going to the limit $\delta\to+0$ and using the Blaschke selection theorem we assume that the corresponding $C_\delta$ tend in the Hausdorff metric to some $C_0$. This $C_0$ must intersect the segment $[v_0,v_1]$ by an interval of length at least $\alpha |v_0-v_1|$. It also has to contain every vertex $v_i$ for $i=2,\dots, d$. Hence, similar to the one-dimensional case, $C_0$ contains in its interior one of the support segments of $\mu_1$. For small enough $\delta$, the convex set $C_\delta$ will also contain at least $1/n$ of the measure $\mu_1$, which is a contradiction.

\end{proof}

\begin{remark}
\label{re:levy-hopf}
For $d=1$ this result follows from the theorem of Levy~\cite{levy1934surune} about a segment on a curve. Note that Hopf~\cite{hopf1936uber} showed that the set of $\alpha$ for which the required segment does not exist is additive.
\end{remark}

\begin{remark}
In~\cite[Theorem~3.2]{blagojevic2007using} it was proved that any two absolutely continuous probability measures on $S^2$ can be cut into pieces of measures $\alpha, \alpha, 1-2\alpha$ by a $3$-fan. Using the central projection we see that any two absolutely continuous probability measures on $\mathbb R^2$ may be cut into pieces of measures $\alpha, \alpha, 1-2\alpha$ with a (possibly degenerate) $3$-fan. It is clear that at least one of the $\alpha$ parts of the fan is a convex angle and therefore Problem~\ref{prob:cutting-alpha} has a positive solution for $k=d=2$ and any $\alpha\in (0,1/2]$.
\end{remark}

\end{document}